\documentclass[twoside,11pt]{article}

%

\usepackage{jmlr2e}

\usepackage{amsmath,amssymb}

\usepackage{tikz,ifthen} 

\usepackage{bbold}
\usepackage{enumerate}
\usepackage{hyperref}

\usepackage[bbgreekl]{mathbbol}
\usepackage{amsfonts}
\DeclareSymbolFontAlphabet{\mathbbl}{AMSb}
\DeclareSymbolFontAlphabet{\mathbb}{bbold}

\newcommand{\1}{\mathbf{1}}

\newcommand{\R}{\mathbb{R}}

\newcommand{\E}{\mathcal{E}}

\usepackage[ruled,vlined]{algorithm2e}

\usepackage{tikz}
\usepackage{subfigure}
\usetikzlibrary{arrows.meta}



\jmlrheading{1}{2000}{1-48}{4/00}{10/00}{Patrick Rebeschini and Sekhar Tatikonda}


\ShortHeadings{Accelerated Consensus via Min-Sum Splitting}{Rebeschini and Tatikonda}
\firstpageno{1}

\begin{document}

\title{Accelerated Consensus via Min-Sum Splitting}

\author{\name Patrick Rebeschini \email patrick.rebeschini@stats.ox.ac.uk \\
       \addr Department of Statistics\\
       University of Oxford
       \AND
       \name Sekhar Tatikonda \email sekhar.tatikonda@yale.edu \\
       \addr Department of Electrical Engineering\\
       Yale University
       }

\editor{}

\maketitle

\thispagestyle{plain}

\begin{abstract}
We apply the Min-Sum message-passing protocol to solve the consensus problem in distributed optimization. We show that while the ordinary Min-Sum algorithm does not converge, a modified version of it known as Splitting yields convergence to the problem solution. We prove that a proper choice of the tuning parameters allows Min-Sum Splitting to yield subdiffusive accelerated convergence rates, matching the rates obtained by shift-register methods. The acceleration scheme embodied by Min-Sum Splitting for the consensus problem bears similarities with lifted Markov chains techniques and with multi-step first order methods in convex optimization.
\end{abstract}

\begin{keywords}
Min-Sum, Consensus, Lifted Markov Chains, Acceleration.
\end{keywords}

\section{Introduction}
Min-Sum is a local message-passing algorithm designed to distributedly optimize an objective function that can be written as a sum of component functions, each of which depends on a subset of the decision variables. Due to its simplicity, Min-Sum has emerged as canonical protocol to address large scale problems in a variety of domains, including signal processing, statistics, and machine learning. 
For problems supported on tree graphs, the Min-Sum algorithm corresponds to dynamic programming and is guaranteed to converge to the problem solution. For arbitrary graphs, the ordinary Min-Sum algorithm may fail to converge, or it may converge to something different than the problem solution \cite{MVR10}. In the case of strictly convex objective functions, there are known sufficient conditions to guarantee the convergence and correctness of the algorithm. The most general condition requires the Hessian of the objective function to be scaled diagonally dominant \cite{MVR10,MJW06}. 
While the Min-Sum scheme can be applied to optimization problems with constraints, by incorporating the constraints into the objective function as hard barriers, the known sufficient conditions do not apply in this case.
%
%
%

In \cite{RT2013}, a generalization of the traditional Min-Sum scheme has been proposed, based on a reparametrization of the original objective function. This algorithm is called Splitting, as it can be derived by creating equivalent graph representations for the objective function by ``splitting'' the nodes of the original graph. 
In the case of unconstrained problems with quadratic objective functions, where Min-Sum is also known as Gaussian Belief Propagation, the algorithm with splitting has been shown to yield convergence
in settings where the ordinary Min-Sum does not converge \cite{RT13}. To date, a theoretical investigation of the rates of convergence of Min-Sum Splitting has not been established.

In this paper we establish rates of convergence for the Min-Sum Splitting algorithm applied to solve the consensus problem, which can be formulated as an equality-constrained problem in optimization.
The basic version of the consensus problem is the network averaging problem.
In this setting, each node in a graph is assigned a real number, and the goal is to design a distributed protocol that allows the nodes to iteratively exchange information with their neighbors so to arrive at consensus on the average across the network. Early work include \cite{Tsitsiklis1984ProblemsinDecentralized,TBA86}. The design of distributed algorithms to solve the averaging problem has received a lot of attention recently, as consensus represents a widely-used primitive to compute aggregate statistics in a variety of fields. Applications include, for instance, estimation problems in sensor networks, distributed tracking and localization, multi-agents coordination, and distributed inference \cite{Lesser-255,LWHS02,DKMRS10,KAM08}. 
Consensus is typically combined with some form of local optimization over a peer-to-peer network, as in the case of iterative subgradient methods \cite{NO09,RNV10,JRJ10,SAW12,CS12,JXM14,EXTRA15}. 
In large-scale machine learning, consensus is used as a tool to distribute the minimization of a loss function over a large dataset into a network of processors that can exchange and aggregate information, and only have access to a subset of the data \cite{PKP09,Forero:2010,MBG10,Boyd:2011}.

Classical algorithms to solve the network averaging problem involve linear dynamical systems supported on the nodes of the graph. Even when the coefficients that control the dynamics are optimized, these methods are known to suffer from a ``diffusive'' rate of convergence, which corresponds to the rate of convergence to stationarity exhibited by
the ``diffusion'' random walk naturally associated to a graph \cite{XB04,BGPS06}.
This rate is optimal for graphs with good expansion properties, such as complete graphs or expanders. In this case the convergence time, i.e., the number of iterations required to reach a prescribed level of error accuracy $\varepsilon > 0$ in the $\ell_2$ norm relative to the initial condition, scales independently of the dimension of the problem, as $\Theta(\log 1/\varepsilon)$.
For graphs with geometry this rate is suboptimal \cite{Diaconis1994}, and it  does not yield a convergence time that matches the lower bound $\Omega(D\log1/\varepsilon)$, where $D$ is the graph diameter \cite{NET-014,ICM17}.
For example, in both cycle graphs and in grid-like topologies the number of iterations scale like $\Theta(D^2\log1/\varepsilon)$ (if $n$ is the number of nodes, $D\sim n$ in a cycle and $D \sim \sqrt{n}$ in a two-dimensional torus). $\Theta(D^2\log1/\varepsilon)$ is also the convergence time exhibited in random geometric graphs, which represent the relevant topologies for many applications in sensor networks \cite{DKMRS10}. In \cite{Diaconis1994} it was established that for a class of graphs with geometry (polynomial growth or finite doubling dimension), the mixing time of any reversible Markov chain scales at least like $D^2$,
embodying the fact that symmetric walks on these graphs take $D^2$ steps to travel distances of order $\!D$.

Min-Sum schemes to solve the consensus problem have been previously investigated in \cite{MR06}. The authors show that the ordinary Min-Sum algorithm does not converge in graphs with cycles. They investigate a modified version of it that uses a soft barrier function to incorporate the equality constrains into the objective function. In the case of $d$-regular graphs, upon a proper choice of initial conditions, the authors show that the algorithm they propose reduces to a linear process supported on the directed edges of the graph, and they characterize the convergence time of the algorithm in terms of the Ces\`aro mixing time of a Markov chain defined on the set of directed edges of the original graph. 
In the case of cycle graphs (i.e., $d=2$), they prove that the mixing time scales like $O(D)$, which yields the convergence time $O(D/\varepsilon\log 1/\varepsilon)$. See Theorem 4 and Theorem 5 in \cite{MR06}. In the case of $(d/2)$-dimensional tori ($D \sim n^{2/d}$), they conjecture that the mixing time is
$\Theta(D^{2(d-1)/d})$,
but do not present bounds for the convergence time. See Conjecture 1 in \cite{MR06}.
For other graph topologies, they leave the mixing time (and convergence time) achieved by their method as an open question.

In this paper we show that the Min-Sum scheme based on splitting yields convergence to the consensus solution, and we analytically establish rates of convergence for \emph{any} graph topology. First, we show that a certain parametrization of the Min-Sum protocol for consensus yields a linear message-passing update for \emph{any} graph and for \emph{any} choice of initial conditions.
Second, we show that the introduction of the splitting parameters is not only fundamental to guarantee the convergence and correctness of the Min-Sum scheme in the consensus problem, but that proper tuning of these parameters yields accelerated (i.e., ``subdiffusive'') asymptotic rates of convergence. We establish a square-root improvement for the asymptotic convergence time over diffusive methods, which allows Min-Sum Splitting to scale like $O(D\log (D/\varepsilon))$ for cycles and tori. Our results show that Min-Sum schemes are competitive and get close to the optimal rate $O(D \log(1/\varepsilon))$ recently established for some algorithms based on Nesterov's acceleration \cite{2014arXiv1411.4186O,ICM17}.
The main tool used for the analysis involves the construction of an auxiliary linear process supported on the nodes of the original graph to track the evolution of the Min-Sum Splitting algorithm, which is instead supported on the directed edges. This construction allows us to relate the convergence time of the Min-Sum scheme to the spectral gap of the matrix describing the dynamics of the auxiliary process, which is easier to analyze than the matrix describing the dynamics on the edges as in \cite{MR06}.

In the literature, overcoming the suboptimal convergence rate of classical algorithms for network averaging consensus has motivated the design of several accelerated methods. Two main lines of research have been developed, and seem to have evolved independently of each others: one involves lifted Markov chains techniques, see \cite{NET-014} for a review, the other involves accelerated first order methods in convex optimization, see \cite{GSJ13} for a review. Another contribution of this paper is to show that Min-Sum Splitting bears similarities with both types of accelerated methods. On the one hand, Min-Sum can be seen as a process on a lifted space, which is the space of directed edges in the original graph. Here, splitting is seen to introduce a directionality in the message exchange of the ordinary Min-Sum protocol that is analogous to the directionality introduced in non-reversible random walks on lifted graphs to achieve faster convergence to stationarity. The advantage of the Min-Sum algorithm over lifted Markov chain methods is that no lifted graph needs to be constructed. 
On the other hand, the directionality induced on the edges by splitting translates into a memory term for the auxiliary algorithm running on the nodes. This memory term, which allows nodes to remember previous values and incorporate them into the next update, directly relates the Min-Sum Splitting algorithm to accelerated multi-step first order methods in convex optimization. 
In particular, we show that a proper choice of the splitting parameters recovers the same matrix that support the evolution of shift-register methods used in numerical analysis for linear solvers, and, as a consequence, we recover the same accelerated rate of convergence for consensus \cite{YOUNG1972137,Cao_acceleratedgossip,Liu2013873}.


To summarize, the main contributions of this paper are:
\begin{enumerate}
\item First connection of Min-Sum schemes with lifted Markov chains techniques and multi-step methods in convex optimization.
\item First proof of how the directionality embedded in Belief Propagation protocols can be tuned and exploited to accelerate the convergence rate towards the problem solution.
\item First analysis of convergence rates for Min-Sum Splitting. New proof technique based on the introduction of an auxiliary process to track the evolution of the algorithm on the nodes.
\item Design of a Min-Sum protocol for the consensus problem that achieves better convergence rates than the ones established (and conjectured) for the Min-Sum method in \cite{MR06}.
\end{enumerate}
Our results motivate further studies to generalize the acceleration due to splittings to other problems.

The paper is organized as follows. In Section \ref{sec:The Min-Sum Splitting algorithm} we introduce the Min-Sum Splitting algorithm in its general form. In Section \ref{sec:The consensus problem} we describe the consensus problem and review the classical diffusive algorithms. In Section \ref{sec:Accelerated algorithms} we review the main accelerated methods that have been proposed in the literature. In Section \ref{sec:Min-Sum Splitting for consensus} we specialize the Min-Sum Splitting algorithm to the consensus problem, and show that a proper parametrization yields a linear exchange of messages supported on the directed edges of the graph. In Section \ref{sec:Auxiliary message-passing scheme} we derive the auxiliary message-passing algorithm that allows us to track the evolution of the Min-Sum Splitting algorithm via a linear process with memory supported on the nodes of the graph. In Section \ref{sec:Convergence results} we state Theorem \ref{thm:convergence shift-register}, which shows that a proper choice of the tuning parameters recovers the rates of shift-registers. 
Proofs are given in the appendix.

\section{The Min-Sum Splitting algorithm}\label{sec:The Min-Sum Splitting algorithm}

The Min-Sum algorithm is a distributed routine to optimize a cost function that is the sum of components supported on a given graph structure. Given a simple graph $G=(V,E)$ with $n:=|V|$ vertices and $m:=|E|$ edges, let us assume that we are given a set of functions $\phi_{v}: \R \rightarrow \R \cup\{\infty\}$, for each $v\in V$, and $\phi_{vw}= \phi_{wv}: \R\times\R \rightarrow \R\cup\{\infty\}$, for each $\{v,w\}\in E$, and that we want to solve the following problem over the decision variables $x=(x_v)_{v\in V}\in\R^V$:
\begin{align}
	\text{minimize }\quad   & 
	\sum_{v\in V} \phi_{v}(x_{v}) +
	\sum_{\{v,w\}\in E} \phi_{vw}(x_v,x_w).
	\label{cost function min-sum}
\end{align}
The Min-Sum algorithm describes an iterative exchange of \emph{messages}---which are functions of the decision variables---associated to each \emph{directed} edge in $G$. Let $\E := \{(v,w)\in V\times V : \{v,w\}\in E \}$ be the set of directed edges associated to the undirected edges in $E$ (each edge in $E$ corresponds to two edges in $\mathcal{E}$). In this work we consider the synchronous implementation of the Min-Sum algorithm where at any given time step $s$, each directed edge $(v,w)\in \E$ supports two messages, $\hat\xi^{s}_{vw},\hat\mu^{s}_{vw}:\R\rightarrow\R \cup \{\infty\}$. Messages are computed iteratively. Given an initial choice of messages $\hat\mu^{0}=(\hat\mu^{0}_{vw})_{(v,w)\in\mathcal{E}}$, the Min-Sum scheme that we investigate in this paper is given in Algorithm \ref{alg:Min-Sum splitting}. Henceforth, for each $v\in V$, let $\mathcal{N}(v):=\{w\in V : \{v,w\}\in E\}$ denote the neighbors of node $v$.
\begin{algorithm}
	\DontPrintSemicolon
	\KwIn{Messages $\hat\mu^{0}=(\hat\mu^{0}_{vw})_{(v,w)\in\mathcal{E}}$; parameters $\delta\in \R$ and $\Gamma\in \R^{V\times V}$ symmetric; time $t\ge 1$.}
	\For{$s \in \{1,\ldots,t\}$}{
		$\hat\xi^{s}_{wv} = 
		\phi_v/\delta -\hat\mu^{s-1}_{wv} 
		+ \sum_{z\in\mathcal{N}(v)} \Gamma_{zv}
		\hat\mu^{s-1}_{zv}, (w,v)\in \mathcal{E}$;\\
		\vspace{.11cm}
		$
		\hat\mu^{s}_{wv} =
		\min_{z\in\R}
		\{
		\phi_{vw}(\,\cdot\,,z)/ \Gamma_{vw}
		+ (\delta-1) \hat\xi^{s}_{wv}
		+ \delta \hat\xi^{s}_{vw}(z)
		\}, (w,v)\in \mathcal{E}$;
	}
	$\mu^{t}_{v} = \phi_v + \delta \sum_{w\in\mathcal{N}(v)} \Gamma_{wv} \hat\mu^{t}_{wv}, v\in V$;\\
	\KwOut{$x_v^{t} = \arg\min_{z\in\R} \mu^{t}_{v}(z), v\in V$.}
	\caption{Min-Sum Splitting}
	\label{alg:Min-Sum splitting}
\end{algorithm}

The formulation of the Min-Sum scheme given in Algorithm \ref{alg:Min-Sum splitting}, which we refer to as Min-Sum Splitting, was introduced in \cite{RT2013}. This formulation admits as tuning parameters the real number $\delta\in\R$ and the symmetric matrix $\Gamma=(\Gamma_{vw})_{v,w\in V}\in \R^{V\times V}$.
Without loss of generality, we assume that the sparsity of $\Gamma$ respects the structure of the graph $G$, in the sense that if $\{v,w\}\not\in E$ then $\Gamma_{vw}= 0$ (note that Algorithm \ref{alg:Min-Sum splitting} only involves summations with respect to nearest neighbors in the graph). The choice of $\delta=1$ and $\Gamma = A$, where $A$ is the adjacency matrix defined as $A_{vw}:=1$ if $\{v,w\}\in E$ and $A_{vw}:=0$ otherwise, yields the ordinary Min-Sum algorithm. For an arbitrary choice of strictly positive \emph{integer} parameters, Algorithm \ref{alg:Min-Sum splitting} can be seen to correspond to the ordinary Min-Sum algorithm applied to a \emph{new formulation} of the original problem, where an equivalent objective function is obtained from the original one in \eqref{cost function min-sum} by splitting each term $\phi_{vw}$ into $\Gamma_{vw}\in\mathbb{N}\setminus\{0\}$ terms, and each term $\phi_v$ into $\delta\in\mathbb{N}\setminus\{0\}$ terms. Namely, minimize
$
	\sum_{v\in V} \sum_{k=1}^{\delta} \phi^k_{v}(x_{v}) +
	\sum_{\{v,w\}\in E} \sum_{k=1}^{\Gamma_{vw}} \phi^{k}_{vw}(x_v,x_w),
$
with $\phi^k_{v} := \phi_{v}/\delta$ and $\phi^{k}_{vw} := \phi_{vw}/\Gamma_{vw}$.\footnote{As mentioned in \cite{RT2013}, one can also consider a more general formulation of the splitting algorithm with $\delta\rightarrow(\delta_v)_{v\in V}\in \R$ (possibly also with time-varying parameters). The current choice of the algorithm is motivated by the fact that in the present case the output of the algorithm can be tracked by analyzing a linear system on the nodes of the graph, as we will show in Section \ref{sec:Min-Sum Splitting for consensus}.} Hence the reason for the name ``splitting'' algorithm. Despite this interpretation, Algorithm \ref{alg:Min-Sum splitting} is defined for any \emph{real} choice of parameters $\delta$ and $\Gamma$.

In this paper we investigate the convergence behavior of the Min-Sum Splitting algorithm for some choices of $\delta$ and $\Gamma$, in the case of the consensus problem that we define in the next section.

\section{The consensus problem and standard diffusive algorithms}\label{sec:The consensus problem}
Given a simple graph $G=(V,E)$ with $n:=|V|$ nodes, for each $v\in V$ let $\phi_{v}: \R \rightarrow \R \cup\{\infty\}$ be a given function. The consensus problem is defined as follows:
\begin{align}
		\text{minimize\quad}\sum_{v\in V} \phi_v(x_v)
		\quad
		\text{subject to\quad} x_v = x_w, \{v,w\}\in E.
\label{def:main problem}
\end{align}
We interpret $G$ as a communication graph where each node represents an agent, and each edge represent a communication channel between neighbor agents. Each agent $v$ is given the function $\phi_v$, and agents collaborate by iteratively exchanging information with their neighbors in $G$ with the goal to eventually arrive to the solution of problem \eqref{def:main problem}. The consensus problem amounts to designing distributed algorithms to solve problem \eqref{def:main problem} that respect the communication constraints encoded by $G$.

A classical setting investigated in the literature is the least-square case yielding the network averaging problem, where for a given $b\in\R^V$ we have\footnote{In the literature, the classical choice is $\phi_v(z) := \frac{1}{2} \sum_{v\in V} ( z - b_v )^2$, which yields the same results as the quadratic function that we define in the main text, as constant terms in the objective function do not alter the optimal point of the problem but only the optimal value of the objective function.}
$\phi_v(z) := \frac{1}{2}z^2 - b_v z$ and the solution of problem \eqref{def:main problem} is $\bar b := \frac{1}{n}\sum_{v\in V} b_v$. In this setup, each agent $v\in V$ is given a number $b_v$, and agents want to exchange information with their neighbors according to a protocol that allows each of them to eventually reach consensus on the average $\bar b$ across the entire network. Classical algorithms to solve this problem involve a linear exchange of information of the form $x^{t}= W x^{t-1}$ with $x^0 = b$, for a given matrix $W\in\R^{V\times V}$ that respects the topology of the graph $G$ (i.e., $W_{vw}\neq 0$ only if $\{v,w\}\in E$ or $v=w$), so that $W^t \rightarrow \1\1^T/n$ for $t\rightarrow\infty$, where $\1$ is the all ones vector. This linear iteration allows for a distributed exchange of information among agents, as at any iteration each agent $v\in V$ only receives information from his/her neighbors $\mathcal{N}(v)$ via the update:
$
	x^t_v = W_{vv} x^{t-1}_v + \sum_{w\in \mathcal{N}(v)} W_{vw} x^{t-1}_w.
$
The original literature on this problem investigates the case where the matrix $W$ has non-negative coefficients and represents the transition matrix of a random walk on the nodes of the graph $G$, so that $W_{vw}$ is interpreted as the probability that a random walk at node $v$ visits node $w$ in the next time step.
A popular choice is given by the Metropolis-Hastings method \cite{NET-014}, which involved the doubly-stochastic matrix $W^{MH}$ defined as $W^{MH}_{vw}:=1/(2d_{\text{max}})$ if $\{v,w\}\in E$, $W^{MH}_{vw}:=1 - d_v/(2d_{\text{max}})$ if $w=v$, and $W^{MH}_{vw}:=0$ otherwise,
where $d_v:=|\mathcal{N}(v)|$ is the degree of node $v$, and $d_{\text{max}} := \max_{v\in V} d_v$ is the maximum degree of the graph $G$. 

In \cite{XB04}, necessary and sufficient conditions are given for a generic matrix $W$ to satisfy $W^t \rightarrow \1\1^T/n$, namely, $\1^TW=\1^T$, $W\1=\1$, and $\rho(W-\1\1^T/n) < 1$, where $\rho(M)$ denotes the spectral radius of a given matrix $M$. The authors show that the problem of choosing the optimal \emph{symmetric} matrix $W$ that minimizes $\rho(W-\1\1^T/n) = \| W-\1\1^T/n \|$ --- where $\|M\|$ denotes the spectral norm of a matrix $M$ that coincides with $\rho(M)$ if $M$ is symmetric --- is a convex problem and it can be cast as a semi-definite program. Typically, the optimal matrix involves negative coefficients, hence departing from the random walk interpretation. However, even the optimal choice of symmetric matrix is shown to yield a diffusive rate of convergence, which is already attained by the matrix $W^{MH}$ \cite{Diaconis1994}. This rate corresponds to the speed of convergence to stationarity achieved by the diffusion random walk, defined as the Markov chain with transition matrix $\text{diag}(d)^{-1}A$, where $\text{diag}(d)\in\R^{V\times V}$ is the degree matrix, i.e., diagonal with $\text{diag}(d)_{vv} := d_v$, and $A\in\R^{V\times V}$ is the adjacency matrix, i.e., symmetric with $A_{vw}:=1$ if $\{v,w\}\in E$, and $A_{vw}:=0$ otherwise.
For instance, the condition $\| W-\1\1^T/n \|^t\le \varepsilon$, where $\|\,\cdot\,\|$ is the $\ell_2$ norm, yields a convergence time that scales like $t\sim \Theta(D^2\log(1/\varepsilon))$ in cycle graphs and tori \cite{roch2005}, where $D$ is the graph diameter.
The authors in \cite{Diaconis1994} established that for a class of graphs with geometry (polynomial growth or finite doubling dimension) the mixing time of any reversible Markov chain scales at least like $D^2$, and it is achieved by Metropolis-Hastings \cite{NET-014}.

\section{Accelerated algorithms}\label{sec:Accelerated algorithms}
To overcome the diffusive behavior typical of classical consensus algorithms, two main types of approaches have been investigated in the literature, which seem to have been developed independently.


The first approach involves the construction of a lifted graph $\widehat G=(\widehat V, \widehat E)$ and of a linear system supported on the nodes of it, of the form $\hat x^{t}= \widehat W \hat x^{t-1}$, where $\widehat W\in\R^{\widehat V\times \widehat V}$ is the transition matrix of a \emph{non-reversible} Markov chain on the nodes of $\widehat G$. This approach has its origins in the work of \cite{DHN00} and \cite{CFL99}, where it was observed for the first time that certain non-reversible Markov chains on properly-constructed lifted graphs yield better mixing times than reversible chains on the original graphs. For some simple graph topologies, such as cycle graphs and two-dimensional grids, the construction of the optimal lifted graphs is well-understood already from the works in \cite{DHN00,CFL99}. A general theory of lifting in the context of Gossip algorithms
has been investigated in \cite{JSS10,NET-014}. However, this construction incurs additional overhead, which yield non-optimal computational complexity, even for cycle graphs and two-dimensional grids. Typically, lifted random walks on arbitrary graph topologies are constructed on a one-by-one case, exploiting the specifics of the graph at hand. This is the case, for instance, for random geometric graphs \cite{LD07,LDZ10}. The key property that allows non-reversible lifted Markov chains to achieve subdiffusive rates is the introduction of a directionality in the process to break the diffusive nature of reversible chains. The strength of the directionality depends on global properties of the original graph, such as the number of nodes \cite{DHN00,CFL99} or the diameter \cite{NET-014}. See Figure \ref{fig:lifted}.

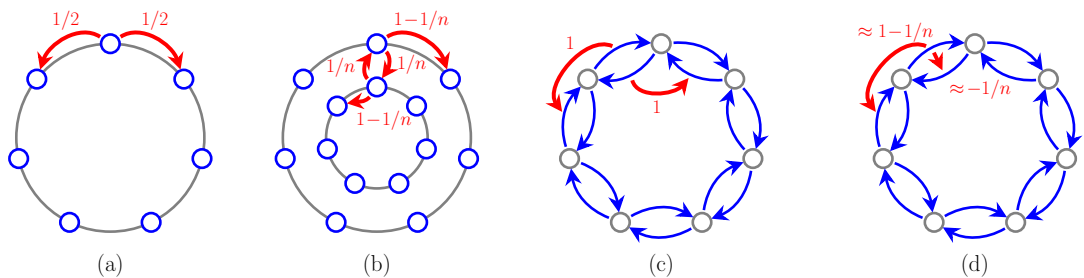
\begin{figure}[h!]
\centering
\vspace{-.1cm}
\hspace{.004cm}
\begin{subfigure}
        \centering
\begin{tikzpicture}
  [var/.style={circle,draw=blue,fill=white,inner sep=0mm,minimum size=20pt,line width=1pt,color=blue,fill=white,font=\Large},
   line/.style={-,color=blue,line width=1pt},
   arrow/.style={-{Stealth[scale=2.5,length=2.5,
          width=2.5]},shorten <=1pt,shorten >=0.1pt,>=stealth,color=red,line width=1.5pt},
   scale=0.25, every node/.style={scale=0.35}]
\draw[line,color=gray] (0:5) arc (0:360:5);
\foreach \phi in {1,...,7}{
    \node[var] (v_\phi) at (360/7 * \phi + 90:5) {};
}
\draw[arrow,bend right=60, in=225]  (v_7) to node [above=5] {\huge$1/2$} (v_1);
\draw[arrow,bend left=60, in=135]  (v_7) to node [above=5] {\huge$1/2$} (v_6);
\node (tocentervertically) at (0,-5.36) {};
\node at (0,-6.8) {\Huge(a)};
\end{tikzpicture}
\end{subfigure}
\hspace{.44cm}
\begin{subfigure}
        \centering
\begin{tikzpicture}
  [var/.style={circle,draw=blue,fill=white,inner sep=0mm,minimum size=20pt,line width=1pt,color=blue,fill=white,font=\Large},
   line/.style={-,color=blue,line width=1pt},
   arrow/.style={-{Stealth[scale=2.5,length=2.5,
          width=2.5]},shorten <=1pt,shorten >=0.1pt,>=stealth,color=red,line width=1.5pt},
   scale=0.25, every node/.style={scale=0.35}]
\draw[line,color=gray] (0:2.7) arc (0:360:2.7);
\foreach \phi in {1,...,7}{
    \node[var] (v_\phi) at (360/7 * \phi + 90:2.7) {};
}
\draw[line,color=gray] (0:5) arc (0:360:5);
\foreach \phi in {1,...,7}{
    \node[var] (w_\phi) at (360/7 * \phi + 90:5) {};
}
\draw[arrow,bend left=60, in=135]  (w_7) to node [above=5] {\huge$1\!-\!1/n$} (w_6);
\draw[arrow,bend left=50,in=150]  (w_7) to node [right=5] {\huge$1/n$} (v_7);
\draw[arrow,bend left=30, in= 170]  (v_7) to node [below=6] {\ \ \ \ \ \ \ \ \ \ \ \ \ \ \huge$1\!-\!1/n$} (v_1);
\draw[arrow,bend left=50,in=150]  (v_7) to node [left] {\huge$1/n\ $} (w_7);
\node (tocentervertically) at (0,-5.36) {};
\node at (0,-6.8) {\Huge(b)};
\end{tikzpicture}
\end{subfigure}
\hspace{.44cm}
\begin{subfigure}
        \centering
\begin{tikzpicture}
  [var/.style={circle,draw=gray,fill=white,inner sep=0mm,minimum size=20pt,line width=1pt,color=gray,fill=white,font=\Large},
   line/.style={-,color=blue,line width=1pt},
   arrow/.style={-{Stealth[scale=2.5,length=2.5,
          width=2.5]},shorten <=1pt,shorten >=0.1pt,>=stealth,color=red,line width=1.5pt},
   scale=0.25, every node/.style={scale=0.35}]

\foreach \phi in {1,...,7}{
    \node[var] (v_\phi) at (360/7 * \phi + 90:5) {};
}

\draw[arrow,bend left,color=blue,line width=1pt]  (v_1) to node [above=5] {} (v_7);
\draw[arrow,bend left,color=blue,line width=1pt]  (v_7) to node [above=5] {} (v_6);
\draw[arrow,bend left,color=blue,line width=1pt]  (v_6) to node [above=5] {} (v_5);
\draw[arrow,bend left,color=blue,line width=1pt]  (v_5) to node [above=5] {} (v_4);
\draw[arrow,bend left,color=blue,line width=1pt]  (v_4) to node [above=5] {} (v_3);
\draw[arrow,bend left,color=blue,line width=1pt]  (v_3) to node [above=5] {} (v_2);
\draw[arrow,bend left,color=blue,line width=1pt]  (v_2) to node [above=5] {} (v_1);
\draw[arrow,bend left,color=blue,line width=1pt]  (v_7) to node [above=5] {} (v_1);
\draw[arrow,bend left,color=blue,line width=1pt]  (v_1) to node [above=5] {} (v_2);
\draw[arrow,bend left,color=blue,line width=1pt]  (v_2) to node [above=5] {} (v_3);
\draw[arrow,bend left,color=blue,line width=1pt]  (v_3) to node [above=5] {} (v_4);
\draw[arrow,bend left,color=blue,line width=1pt]  (v_4) to node [above=5] {} (v_5);
\draw[arrow,bend left,color=blue,line width=1pt]  (v_5) to node [above=5] {} (v_6);
\draw[arrow,bend left,color=blue,line width=1pt]  (v_6) to node [above=5] {} (v_7);
\node (a) at (360/14 + 90:5.25) {};
\node (b) at (-360/14 + 90:3.75) {};
\node (c) at (360/14 + 90:3.75) {};
\node (d) at (3*360/14 + 90:5.25) {};
\draw[arrow,bend right=75]  (a) to node [above=13] {\huge$1$} (d);
\draw[arrow,bend right=75,in= 230]  (c) to node [below=5] {\huge$1$} (b);
\node at (0,-6.8) {\Huge(c)};
\end{tikzpicture}
\end{subfigure}
\hspace{.44cm}
\begin{subfigure}
        \centering
\begin{tikzpicture}
  [var/.style={circle,draw=gray,fill=white,inner sep=0mm,minimum size=20pt,line width=1pt,color=gray,fill=white,font=\Large},
   line/.style={-,color=blue,line width=1pt},
   arrow/.style={-{Stealth[scale=2.5,length=2.5,
          width=2.5]},shorten <=1pt,shorten >=0.1pt,>=stealth,color=red,line width=1.5pt},
   scale=0.25, every node/.style={scale=0.35}]

\foreach \phi in {1,...,7}{
    \node[var] (v_\phi) at (360/7 * \phi + 90:5) {};
}

\draw[arrow,bend left,color=blue,line width=1pt]  (v_1) to node [above=5] {} (v_7);
\draw[arrow,bend left,color=blue,line width=1pt]  (v_7) to node [above=5] {} (v_6);
\draw[arrow,bend left,color=blue,line width=1pt]  (v_6) to node [above=5] {} (v_5);
\draw[arrow,bend left,color=blue,line width=1pt]  (v_5) to node [above=5] {} (v_4);
\draw[arrow,bend left,color=blue,line width=1pt]  (v_4) to node [above=5] {} (v_3);
\draw[arrow,bend left,color=blue,line width=1pt]  (v_3) to node [above=5] {} (v_2);
\draw[arrow,bend left,color=blue,line width=1pt]  (v_2) to node [above=5] {} (v_1);
\draw[arrow,bend left,color=blue,line width=1pt]  (v_7) to node [above=5] {} (v_1);
\draw[arrow,bend left,color=blue,line width=1pt]  (v_1) to node [above=5] {} (v_2);
\draw[arrow,bend left,color=blue,line width=1pt]  (v_2) to node [above=5] {} (v_3);
\draw[arrow,bend left,color=blue,line width=1pt]  (v_3) to node [above=5] {} (v_4);
\draw[arrow,bend left,color=blue,line width=1pt]  (v_4) to node [above=5] {} (v_5);
\draw[arrow,bend left,color=blue,line width=1pt]  (v_5) to node [above=5] {} (v_6);
\draw[arrow,bend left,color=blue,line width=1pt]  (v_6) to node [above=5] {} (v_7);
\node (a) at (360/14 + 90:5.25) {};
\node (b) at (-360/14 + 90:5.25) {};
\node (c) at (360/14 + 90:3.75) {};
\node (d) at (3*360/14 + 90:5.25) {};
\draw[arrow,bend right=75]  (a) to node [above=20] {\ \ \ \ \ \ \ \huge$\approx1\!-\!1/n$} (d);
\draw[arrow,shorten <=0.1pt,shorten >=0.1pt]  (a) to node [below=15] {\ \ \ \ \ \ \ \ \ \ \ \ \ \ \ \ \ \ \ \ \ \ \ \ \ \huge$\approx\!-1/n$} (c);
\node at (0,-6.8) {\Huge(d)};
\end{tikzpicture}
\end{subfigure}
\caption{(a) Symmetric Markov chain $W$ on the nodes of the ring graph $G$. (b) Non-reversible Markov chain $\widehat W$ on the nodes of the lifted graph $\widehat G$ \cite{DHN00}. (c) Ordinary Min-Sum algorithm on the directed edges $\mathcal{E}$ associated to $G$ (i.e., $\widehat K(\delta,\Gamma)$, Algorithm \ref{alg:Min-Sum splitting consensus quadratic}, with $\delta=1$ and $\Gamma = A$, where $A$ is the adjacency matrix of $G$). (d) Min-Sum Splitting $\widehat K(\delta,\Gamma)$, Algorithm \ref{alg:Min-Sum splitting consensus quadratic}, with $\delta=1$, $\Gamma=\gamma W$, $\gamma=2/(1+\sqrt{1-\rho_W^2})$ as in Theorem \ref{thm:convergence shift-register}. Here, $\rho_W$ is $\Theta(1-1/n^2)$ and $\gamma\approx 2(1-1/n)$ for $n$ large. The matrix $\widehat K (\delta,\Gamma)$ has negative entries, departing from the Markov chain interpretation. This is also the case for the optimal tuning in classical consensus schemes \cite{XB04} and for the ADMM lifting in \cite{FB17}.
}
\label{fig:lifted}
\end{figure}

The second approach involves designing linear updates that are supported on the original graph $G$ and keep track of a longer history of previous iterates. This approach relies on the fact that the original consensus update $x^{t}= W x^{t-1}$ can be interpreted as a primal-dual gradient ascent method to solve problem \eqref{def:main problem} with a quadratic objective function \cite{RNB05}. 
This allows the implementation of accelerated gradient methods. To the best of our knowledge, this idea was first introduced in \cite{Ghosh:1996:FSO:237502.237509}, and since then it has been investigated in many other papers. We refer to \cite{GSJ13,Liu2013873}, and references in there, for a review and comparison of multi-step accelerated methods for consensus. 
%
The simplest multi-step extension of gradient methods is Polyak's ``heavy ball,'' which involves adding a ``momentum'' term to the standard update and yields a primal iterate of the form $x^{t} = W x^{t-1} + \gamma (x^{t-1}-x^{t-2})$.
Another popular multi-step method involves Nesterov's acceleration, and yields
$x^{t} = (1+\gamma)W x^{t-1} - \gamma W x^{t-2}$. Aligned with the idea of adding a momentum term is the idea of adding a shift register term, which yields $x^{t} = (1+\gamma)W x^{t-1} - \gamma x^{t-2}$. For our purposes, we note that these methods can be written as
\begin{align}
	\left( \begin{array}{c}
	x^{t}\\
	x^{t-1}
	\end{array} \right)
	=
	K
	\left( \begin{array}{c}
	x^{t-1}\\
	x^{t-2}
	\end{array} \right),
	\label{multi-step update}
\end{align}
for a certain matrix $K\in\R^{2n\times 2n}$. As in the case of lifted Markov chains techniques, also multi-step methods are able to achieve accelerated rates by exploiting some form of global information: the choice of the parameter $\gamma$ that yields subdiffusive rates depends on the eigenvalues of $W$.

\begin{remark}
Beyond lifted Markov chains techniques and accelerated first order methods, many other algorithms have been proposed to solve the consensus problem. The literature is vast. As we focus on Min-Sum schemes, an exhaustive literature review on consensus is beyond the  scope of our work. Of particular interest for our results is the distributed ADMM approach \cite{Boyd:2011,WO12,SLYY14}. Recently in \cite{FB17}, for a class of unconstrained problems with quadratic objective functions, it has been shown that message-passing ADMM schemes can be interpreted as lifting of gradient descent techniques. This prompts for further investigation to connect Min-Sum, ADMM, and accelerated first order methods.
\end{remark}

In the next two sections we show that Min-Sum Splitting bears similarities with both types of accelerated methods described above. On the one hand, in Section \ref{sec:Min-Sum Splitting for consensus} we show that the estimates $x^t_v$'s of Algorithm \ref{alg:Min-Sum splitting} applied to the network averaging problem can be interpreted as the result of a linear process supported on a lifted space, i.e., the space $\mathcal{E}$ of directed edges associated to the undirected edges of $G$. On the other hand, in Section \ref{sec:Auxiliary message-passing scheme} we show that the estimates $x^t_v$'s can be seen as the result of a linear multi-step process supported on the nodes of $G$, which can be written as in \eqref{multi-step update}. Later on, in Section \ref{sec:Convergence results} and Section \ref{sec:Conclusions}, we will see that the similarities just described go beyond the structure of the processes, and they extend to the acceleration mechanism itself. In particular, the choice of splitting parameters that yields subdiffusive convergence rates, matching the asymptotic rates of shift register methods, is also shown to depend on global information about $G$.

\section{Min-Sum Splitting for consensus}
\label{sec:Min-Sum Splitting for consensus}

We apply Min-Sum Splitting to solve network averaging. We show that in this case the message-passing protocol is a linear exchange of parameters associated to the directed edges in $\mathcal{E}$.

Given $\delta\in \R$ and $\Gamma\in \R^{V\times V}$ symmetric, let $\hat h(\delta)\in \R^{\mathcal{E}}$ be the vector defined as $\hat h(\delta)_{wv} := b_w + (1-1/\delta) b_v$,
and let $\widehat K (\delta,\Gamma) \in \R^{\mathcal{E}\times \mathcal{E}}$ be matrix defined as
\begin{align}
	\widehat K (\delta,\Gamma)_{wv,zu} := 
	\begin{cases}
		\delta\Gamma_{zw} &\text{if } u=w, z\in\mathcal{N}(w)\setminus \{v\},\\
		\delta(\Gamma_{vw}-1) &\text{if } u=w, z=v,\\
		(\delta-1)\Gamma_{zv} &\text{if } u=v, z\in\mathcal{N}(v)\setminus \{w\},\\
		(\delta-1)(\Gamma_{wv}-1) &\text{if } u=v, z=w,\\
		0 &\text{otherwise}.
	\end{cases}
	\label{def:K hat}
\end{align}

Consider Algorithm \ref{alg:Min-Sum splitting consensus quadratic} with initial conditions $\hat R^{0}=(\hat R^{0}_{vw})_{(v,w)\in\mathcal{E}}\in\mathbb{R}^\E$, $\hat r^{0}=(\hat r^{0}_{vw})_{(v,w)\in\mathcal{E}}\in\mathbb{R}^\E$.


\begin{algorithm}[H]
	\DontPrintSemicolon
	\KwIn{$\hat R^{0}, \hat r^{0} \in \R^\E$; $\delta\in \R$, $\Gamma\in \R^{V\times V}$ symmetric; $\widehat K(\delta,\Gamma)$ defined in \eqref{def:matrices}; $t\ge 1$.}
	\For{$s \in \{1,\ldots,t\}$}{
	$
	\hat R^{s} = (2-1/\delta)\1 + \widehat K(\delta,\Gamma) \hat R^{s-1};
	$
	\qquad
	$
	\hat r^{s} = \hat h(\delta) + \widehat K(\delta,\Gamma) \hat r^{s-1};
	$
	}
	\KwOut{$x_v^{t} := \frac{b_v + \delta \sum_{w\in\mathcal{N}(v)} \Gamma_{wv} \hat r^{t}_{wv}}{1 + \delta \sum_{w\in\mathcal{N}(v)} \Gamma_{wv} \hat R^{t}_{wv}}, v\in V$.}
	\caption{Min-Sum Splitting, consensus problem, quadratic case}
	\label{alg:Min-Sum splitting consensus quadratic}
\end{algorithm}



\begin{proposition}\label{prop:1}
Let $\delta\in \R$ and $\Gamma\in \R^{V\times V}$ symmetric be given. 
Consider Algorithm \ref{alg:Min-Sum splitting} applied to problem \eqref{def:main problem} with $\phi_v(z) := \frac{1}{2}z^2 - b_v z$ and with quadratic initial messages: $\hat\mu^{0}_{vw}(z) = \frac{1}{2} \hat R_{vw}^{0} z^2 - \hat r_{vw}^{0} z$, for some $\hat R_{vw}^{0} > 0$ and $\hat r_{vw}^{0}\in\R$. Then, the messages will remain quadratic, i.e., $\hat\mu^{s}_{vw}(z) = \frac{1}{2} \hat R_{vw}^{s} z^2 - \hat r_{vw}^{s} z$ for any $s\ge 1$, and the parameters evolve as in Algorithm \ref{alg:Min-Sum splitting consensus quadratic}. If $1 + \delta \sum_{w\in\mathcal{N}(v)} \Gamma_{wv} \hat R^{t}_{wv} > 0$ for any $v\in V$ and $t\ge 1$, then the output of Algorithm \ref{alg:Min-Sum splitting consensus quadratic} coincides with the output of Algorithm \ref{alg:Min-Sum splitting}.
\end{proposition}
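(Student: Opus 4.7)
The plan is induction on $s$, showing that at every iteration of Algorithm \ref{alg:Min-Sum splitting} the messages remain quadratic in their argument and that their coefficients evolve exactly as prescribed by Algorithm \ref{alg:Min-Sum splitting consensus quadratic}. Two structural facts drive the argument: $\phi_v$ is quadratic, and for the consensus problem the edge potential $\phi_{vw}$ is the hard equality indicator of $\{x_v=x_w\}$, so $\phi_{vw}/\Gamma_{vw}$ is the same indicator regardless of $\Gamma_{vw}$, whence the inner minimization in the $\hat\mu^s$ update collapses to $z=y$, giving $\hat\mu^s_{wv}(y)=(\delta-1)\hat\xi^s_{wv}(y)+\delta\hat\xi^s_{vw}(y)$.

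Assuming inductively $\hat\mu^{s-1}_{wv}(y)=\tfrac12\hat R^{s-1}_{wv}y^2-\hat r^{s-1}_{wv}y$ modulo additive constants (which do not affect later updates or argmins), substituting into the $\hat\xi^s$ update produces a quadratic in $y$ with coefficients
$Q^s_{wv}=1/\delta-\hat R^{s-1}_{wv}+\sum_{z\in\mathcal{N}(v)}\Gamma_{zv}\hat R^{s-1}_{zv}$
and $q^s_{wv}=b_v/\delta-\hat r^{s-1}_{wv}+\sum_{z\in\mathcal{N}(v)}\Gamma_{zv}\hat r^{s-1}_{zv}$. The $\hat\mu^s$ collapse then yields $\hat R^s_{wv}=(\delta-1)Q^s_{wv}+\delta Q^s_{vw}$ and $\hat r^s_{wv}=(\delta-1)q^s_{wv}+\delta q^s_{vw}$. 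Expanding, the constant term works out to $(\delta-1)/\delta+1=2-1/\delta$ for $\hat R$ and to $b_w+(1-1/\delta)b_v=\hat h(\delta)_{wv}$ for $\hat r$, while the remaining linear part is, in both cases,
\[
-(\delta-1)M_{wv}-\delta M_{vw}+(\delta-1)\!\!\sum_{z\in\mathcal{N}(v)}\!\!\Gamma_{zv}M_{zv}+\delta\!\!\sum_{z\in\mathcal{N}(w)}\!\!\Gamma_{zw}M_{zw},
\]
with $M=\hat R^{s-1}$ and $M=\hat r^{s-1}$ respectively.

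The crux is then to check that this expression equals $[\widehat K(\delta,\Gamma)M]_{wv}$ using the five-case definition \eqref{def:K hat}: the cases $u=v,z\in\mathcal{N}(v)\setminus\{w\}$ and $u=v,z=w$ contribute $(\delta-1)\Gamma_{zv}M_{zv}$ and $(\delta-1)(\Gamma_{wv}-1)M_{wv}$, which together package $(\delta-1)\sum_{z\in\mathcal{N}(v)}\Gamma_{zv}M_{zv}$ with the correction $-(\delta-1)M_{wv}$, and symmetrically the two cases $u=w$ account for the $\delta$-weighted terms. This combinatorial matching is the main obstacle: the ``$-1$'' shifts in $(\Gamma_{vw}-1)$ and $(\Gamma_{wv}-1)$ originate from the $-\hat\mu^{s-1}_{wv}$ term in $\hat\xi^s_{wv}$ combined with the fact that $\sum_{z\in\mathcal{N}(v)}\Gamma_{zv}\hat\mu^{s-1}_{zv}$ already contains the $z=w$ summand. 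Once this identity is established, substituting the quadratic form of $\hat\mu^t_{wv}$ into $\mu^t_v(z)=\phi_v(z)+\delta\sum_{w\in\mathcal{N}(v)}\Gamma_{wv}\hat\mu^t_{wv}(z)$ gives a quadratic in $z$ with leading coefficient $1+\delta\sum_{w\in\mathcal{N}(v)}\Gamma_{wv}\hat R^t_{wv}$ and linear coefficient $-(b_v+\delta\sum_{w\in\mathcal{N}(v)}\Gamma_{wv}\hat r^t_{wv})$; under the stated positivity assumption $\mu^t_v$ is strictly convex and its unique minimizer is precisely the ratio displayed in the output of Algorithm \ref{alg:Min-Sum splitting consensus quadratic}, which therefore agrees with the output of Algorithm \ref{alg:Min-Sum splitting}.
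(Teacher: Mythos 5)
Your proposal is correct and follows essentially the same route as the paper's proof: the hard-barrier edge potential collapses the inner minimization to $\hat\mu^{s}_{wv}=(\delta-1)\hat\xi^{s}_{wv}+\delta\hat\xi^{s}_{vw}$, linearity preserves the quadratic form of the messages, and the coefficient recursion is identified with $\widehat K(\delta,\Gamma)$ before the final argmin computation. The only difference is cosmetic — you spell out the entrywise verification that the recursion matches the five-case definition of $\widehat K(\delta,\Gamma)$, which the paper asserts in vector form without detail — and your bookkeeping (the $2-1/\delta$ constant, $\hat h(\delta)_{wv}$, and the $-1$ shifts absorbing the $-\hat\mu^{s-1}$ terms) is accurate.
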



\section{Auxiliary message-passing scheme}\label{sec:Auxiliary message-passing scheme}
We show that the output of Algorithm \ref{alg:Min-Sum splitting consensus quadratic} can be tracked by a \emph{new} message-passing scheme that corresponds to a multi-step linear exchange of parameters associated to the nodes of $G$. This auxiliary algorithm represents the main tool to establish convergence rates for the Min-Sum Splitting protocol, i.e., Theorem \ref{thm:convergence shift-register} below. 
The intuition behind the auxiliary process is that while Algorithm \ref{alg:Min-Sum splitting} (hence, Algorithm \ref{alg:Min-Sum splitting consensus quadratic}) involves an exchange of messages supported on the directed edges $\E$, the computation of the estimates $x^{t}_v$'s only involve the \emph{belief functions} $\mu^{t}_v$'s, which are supported on the nodes of $G$. Due to the simple nature of the pairwise equality constraints in the consensus problem, in the present case a reparametrization allows to track the output of Min-Sum via an algorithm that directly updates the belief functions on the nodes of the graph, which yields Algorithm \ref{alg:auxiliary consensus quadratic}.

Given $\delta\in \R$ and $\Gamma\in \R^{n\times n}$ symmetric, define the matrix $K(\delta,\Gamma)\in\R^{2n\times 2n}$ as
\begin{align}
	K(\delta,\Gamma):=
	\left( \begin{array}{cc}
	(1-\delta)I - (1-\delta)\text{diag}(\Gamma \1) + \delta\Gamma & \delta I \\
	\delta I - \delta\text{diag}(\Gamma \1) + (1-\delta)\Gamma & (1-\delta) I
	\end{array} \right),
	\label{def:matrices}
\end{align}
where $I\in\R^{V\times V}$ is the identity matrix and $\text{diag}(\Gamma \1)\in\R^{V\times V}$ is diagonal with $(\text{diag}(\Gamma \1))_{vv} = (\Gamma \1)_v = \sum_{w\in\mathcal{N}(v)}\Gamma_{vw}$. 
Consider Algorithm \ref{alg:auxiliary consensus quadratic} with initial conditions $R^{0}, r^{0}, Q^{0}, q^{0} \in \R^V$.

\begin{algorithm}
	\DontPrintSemicolon
	\KwIn{$R^{0}, r^{0}, Q^{0}, q^{0} \in \R^V$; $\delta\in \R$, $\Gamma\in \R^{V\times V}$ symmetric; $K(\delta,\Gamma)$ defined in \eqref{def:matrices}; $t\ge 1$.}
	\For{$s \in \{1,\ldots,t\}$}{
	$
	\left( \begin{array}{c}
	r^{s}\\
	q^{s}
	\end{array} \right)
	=
	K(\delta,\Gamma)
	\left( \begin{array}{c}
	r^{s-1}\\
	q^{s-1}
	\end{array} \right);
	\qquad
	\left( \begin{array}{c}
	R^{s}\\
	Q^{s}
	\end{array} \right)
	=
	K(\delta,\Gamma)
	\left( \begin{array}{c}
	R^{s-1}\\
	Q^{s-1}
	\end{array} \right);
	$
	}
	\KwOut{$x_v^{t} := r_v^{t}/R^{t}_v, v\in V$.}
	\caption{Auxiliary message-passing}
	\label{alg:auxiliary consensus quadratic}
\end{algorithm}

%


\begin{proposition}\label{prop:2}
Let $\delta\in \R$ and $\Gamma\in \R^{V\times V}$ symmetric be given. The output of Algorithm \ref{alg:Min-Sum splitting consensus quadratic} with initial conditions $\hat R^{0}, \hat r^{0}\in\R^\E$ is the output of Algorithm \ref{alg:auxiliary consensus quadratic} with 
$
	R^{0}_v := 1 + \delta \sum_{w\in\mathcal{N}(v)} \Gamma_{wv} \hat R^{0}_{wv}$, 
$Q^{0}_v := 1 - \delta \sum_{w\in\mathcal{N}(v)} \Gamma_{wv} \hat R^{0}_{wv}$,
$r^{0}_v := b_v + \delta \sum_{w\in\mathcal{N}(v)} \Gamma_{wv} \hat r^{0}_{wv}$,
and
$q^{0}_v := b_v - \delta \sum_{w\in\mathcal{N}(v)} \Gamma_{vw} \hat r^{0}_{vw}$.
\end{proposition}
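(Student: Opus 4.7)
\textbf{Proof plan for Proposition \ref{prop:2}.} The strategy is to \emph{define} node-level quantities from the edge-level iterates of Algorithm \ref{alg:Min-Sum splitting consensus quadratic} by
\[
r^s_v := b_v + \delta \!\!\sum_{w\in\mathcal{N}(v)}\!\!\Gamma_{wv}\hat r^s_{wv}, \qquad q^s_v := b_v - \delta \!\!\sum_{w\in\mathcal{N}(v)}\!\!\Gamma_{vw}\hat r^s_{vw},
\]
and analogously $R^s_v, Q^s_v$ with $b_v$ replaced by $1$ and $\hat r^s$ by $\hat R^s$. By construction these specialize at $s=0$ to the initial conditions in the statement, and the output of Algorithm \ref{alg:Min-Sum splitting consensus quadratic} is exactly $r^t_v/R^t_v$, matching the output of Algorithm \ref{alg:auxiliary consensus quadratic}. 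It thus suffices to prove by induction on $s$ that $(r^s,q^s)^\top = K(\delta,\Gamma)\,(r^{s-1},q^{s-1})^\top$ and likewise for $(R^s,Q^s)$.

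The technical core is to unfold $\hat r^s_{wv}=\hat h(\delta)_{wv}+(\widehat K\hat r^{s-1})_{wv}$ using the case definition \eqref{def:K hat}. Splitting the four nonzero entries according to the target coordinate $u\in\{v,w\}$ and extending each restricted neighbor sum $z\in\mathcal{N}(w)\setminus\{v\}$ to a full sum plus a correction at $z=v$ (and symmetrically for the $u=v$ block), the apparently awkward coefficients $\delta(\Gamma_{vw}-1)$ and $(\delta-1)(\Gamma_{wv}-1)$ are seen to be precisely what is needed to cancel the $\Gamma_{vw}\hat r^{s-1}_{vw}$ terms while leaving clean residuals. Combining with $\hat h(\delta)_{wv}=b_w+(1-1/\delta)b_v$ and the identities $\delta\sum_z\Gamma_{zw}\hat r^{s-1}_{zw}=r^{s-1}_w-b_w$ and $\delta\sum_z\Gamma_{zv}\hat r^{s-1}_{zv}=r^{s-1}_v-b_v$, the $b$-terms cancel and one obtains the compact identity
\[
\hat r^s_{wv} \;=\; r^{s-1}_w \;+\; (1-1/\delta)\,r^{s-1}_v \;-\; \delta\,\hat r^{s-1}_{vw} \;-\; (\delta-1)\,\hat r^{s-1}_{wv}.
\]
This is the main obstacle of the proof: the bookkeeping must be done carefully so that the ``$-1$'' in each of $\Gamma_{vw}-1$ and $\Gamma_{wv}-1$ conspires with the leading coefficient on the other side of the edge to eliminate the remaining $\hat r$-dependence down to only the two messages on edge $\{v,w\}$ itself.

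With this in hand, the rest is a matrix identification. Multiplying the displayed identity by $\delta\Gamma_{wv}$ and summing over $w\in\mathcal{N}(v)$, I would use $\delta\sum_w\Gamma_{wv}\hat r^{s-1}_{wv}=r^{s-1}_v-b_v$ for the incoming sum and $\delta\sum_w\Gamma_{wv}\hat r^{s-1}_{vw}=b_v-q^{s-1}_v$ for the outgoing sum (via symmetry $\Gamma_{wv}=\Gamma_{vw}$). After adding $b_v$ and collecting, the constant terms cancel and one reads off exactly
\[
r^s \;=\; \big[\delta\Gamma + (\delta-1)\operatorname{diag}(\Gamma\1) - (\delta-1)I\big] r^{s-1} + \delta\, q^{s-1},
\]
which is the first block-row of $K(\delta,\Gamma)$ in \eqref{def:matrices}. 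An analogous manipulation starting from $\hat r^s_{vw}$ and using the definition of $q^s_v$ reproduces the second block-row. Finally, since the $\hat R$-recursion differs from the $\hat r$-recursion only by the forcing term $(2-1/\delta)\1 = \hat h(\delta)\big|_{b\equiv \1}$, the identical derivation with $b\to\1$ yields $(R^s,Q^s)^\top = K(\delta,\Gamma)(R^{s-1},Q^{s-1})^\top$, closing the induction and hence the proof.
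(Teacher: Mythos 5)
Your proposal is correct and follows essentially the same route as the paper: the paper defines the belief functions $\mu^s_v=\phi_v+\delta\sum_{w}\Gamma_{wv}\hat\mu^s_{wv}$ and $\chi^s_v=\phi_v-\delta\sum_{w}\Gamma_{vw}\hat\mu^s_{vw}$, sums the edge update over neighbors using the symmetry of $\Gamma$, and reads off $K(\delta,\Gamma)$, which is exactly your change of variables carried out at the level of the linear parameters instead of the functions. Your intermediate identity $\hat r^s_{wv}=r^{s-1}_w+(1-1/\delta)r^{s-1}_v-\delta\hat r^{s-1}_{vw}-(\delta-1)\hat r^{s-1}_{wv}$ and the resulting block rows check out against \eqref{def:matrices}, so the argument is sound.
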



Proposition \ref{prop:2} shows that upon proper initialization, the outputs of Algorithm \ref{alg:Min-Sum splitting consensus quadratic} and Algorithm \ref{alg:auxiliary consensus quadratic} are equivalent. Hence, Algorithm \ref{alg:auxiliary consensus quadratic} represents a tool to investigate the convergence behavior of the Min-Sum Splitting algorithm. Analytically, the advantage of the formulation given in Algorithm \ref{alg:auxiliary consensus quadratic} over the one given in Algorithm \ref{alg:Min-Sum splitting consensus quadratic} is that the former involves two coupled systems of $n$ equations whose convergence behavior can explicitly be linked to the spectral properties of the $n\times n$ matrix $\Gamma$, as we will see in Theorem \ref{thm:convergence shift-register} below. On the contrary, the linear system of $2m$ equations in Algorithm \ref{alg:Min-Sum splitting consensus quadratic} does not seem to exhibit an immediate link to the spectral properties of $\Gamma$.
In this respect, we note that the previous paper that investigated Min-Sum schemes for consensus, i.e., \cite{MR06}, characterized the convergence rate of the algorithm under consideration --- albeit only in the case of $d$-regular graphs, and upon initializing the quadratic terms to the fix point --- in terms of the spectral gap of a matrix that controls a linear system of $2m$ equations. However, the authors only list results on the behavior of this spectral gap in the case of cycle graphs, i.e., $d=2$, and present a conjecture for $2d$-tori.


\section{Accelerated convergence rates for Min-Sum Splitting}\label{sec:Convergence results}

We investigate the convergence behavior of the Min-Sum Splitting algorithm to solve problem \eqref{def:main problem} with quadratic objective functions. Henceforth, without loss of generality, let $b\in\R^V$ be given with $0<b_v<1$ for each $v\in V$, and let $\phi_v(z) := \frac{1}{2}z^2 - b_v z$. Define $\bar b := \sum_{v\in V} b_v / n$.

%


Recall from \cite{MR06} that the ordinary Min-Sum algorithm (i.e., Algorithm \ref{alg:Min-Sum splitting consensus quadratic} with $\delta=1$ and $\Gamma=A$, where $A$ is the adjacency matrix of the graph $G$) does not converge if the graph $G$ has a cycle.
We now show that a proper choice of the tuning parameters allows Min-Sum Splitting to converge to the problem solution in a subdiffusive way. The proof of this result, which is contained in the appendix, relies on the use of the auxiliary method defined in Algorithm \ref{alg:auxiliary consensus quadratic} to track the evolution of the Min-Sum Splitting scheme. Here, recall that $\| x \|$ denotes the $\ell_2$ norm of a given vector $x$, $\| M \|$ denotes the $\ell_2$ matrix norm of the given matrix $M$, and $\rho(M)$ its spectral radius.

\begin{theorem}\label{thm:convergence shift-register}
Let $W\in\R^{V\times V}$ be a symmetric matrix with $W\1=\1$ and $\rho_W:=\rho(W-\1\1^T/n) < 1$. Let $\delta=1$ and $\Gamma=\gamma W$, with $\gamma = 2/(1+\sqrt{1-\rho_W^2})$. Let $x^t$ be the output at time $t$ of Algorithm \ref{alg:Min-Sum splitting consensus quadratic} with initial conditions $\hat R^{0} = \hat r^{0}= 0$.
Define
\begin{align}
	K
	:=
	\left( \begin{array}{cc}
	\gamma W & I \\
	(1-\gamma ) I & 0
	\end{array} \right),
	\qquad
	K^\infty
	:=
	\frac{1}{(2-\gamma)n}
	\left( \begin{array}{cc}
	\1\1^T & \1\1^T \\
	(1-\gamma ) \1\1^T & (1-\gamma ) \1\1^T
	\end{array} \right).	
	\label{def:K}
\end{align}
Then, for any $v\in V$ we have
$
	\lim_{t\rightarrow\infty} x^t_v = \bar b
$
and
$
	\| x^t - \bar b \1 \| \le \frac{4 \sqrt{2n}}{2-\gamma} \| (K-K^\infty)^t \|.
$\\
The asymptotic rate of convergence is given by
\vskip0.04cm
$
	\rho_K:=\rho(K-K^\infty) = \lim_{t\rightarrow\infty}\| (K-K^\infty)^t \|^{1/t} = 
	\sqrt{(1\!-\!\sqrt{1\!-\!\rho_W^2})/(1\!+\!\sqrt{1\!-\!\rho_W^2})} < \rho_W < 1,
$
\vskip0.04cm
which satisfies
$
	\frac{1}{2} \sqrt{1/(1-\rho_W)} \le 1/(1-\rho_K) \le \sqrt{1/(1-\rho_W)}.
$
\end{theorem}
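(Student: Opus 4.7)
The plan is to pass from Algorithm \ref{alg:Min-Sum splitting consensus quadratic} to the auxiliary Algorithm \ref{alg:auxiliary consensus quadratic} via Proposition \ref{prop:2}. Initializing with $\hat R^{0}=\hat r^{0}=0$ and $\delta=1$ gives $R^{0}=Q^{0}=\1$ and $r^{0}=q^{0}=b$, and since $W\1=\1$ the matrix $K(1,\gamma W)$ from \eqref{def:matrices} collapses exactly to the $K$ of \eqref{def:K}. All subsequent analysis is carried out on the iterates $(r^{t},q^{t})^\top = K^{t}(b,b)^\top$ and $(R^{t},Q^{t})^\top = K^{t}(\1,\1)^\top$, with final output $x_v^t=r_v^t/R_v^t$.

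First I would establish by direct block computation the identities $K K^\infty = K^\infty K = K^\infty$ and $(K^\infty)^2 = K^\infty$, using $W\1=\1$ and $\1^\top W=\1^\top$. These imply the standard consequence $(K-K^\infty)^{t} = K^{t}-K^\infty$ for $t\ge 1$. Next, setting $\tilde b := b-\bar b\1$ (so $\1^\top\tilde b=0$), I would check $K^\infty(\tilde b,\tilde b)^\top=0$, whence
\[
r^{t}-\bar b\, R^{t} \;=\; \bigl[K^{t}(\tilde b,\tilde b)^\top\bigr]_{\text{top}} \;=\; \bigl[(K-K^\infty)^{t}(\tilde b,\tilde b)^\top\bigr]_{\text{top}}.
\]
A separate $2\times2$ reduction of $K$ on the $\1$-invariant subspace (with eigenvalues $1$ and $\gamma-1$ of the block $\bigl(\begin{smallmatrix}\gamma&1\\1-\gamma&0\end{smallmatrix}\bigr)$) shows $R^{t}=\alpha_t\1$ with $\alpha_t$ monotonically increasing from $\alpha_0=1$ to $2/(2-\gamma)$; in particular $R_v^t\ge 1$. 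Combining with $\|\tilde b\|\le\sqrt n$ (from $b_v\in(0,1)$) and $\|(\tilde b,\tilde b)^\top\|=\sqrt 2\,\|\tilde b\|$ gives $\|x^{t}-\bar b\1\|\le\sqrt{2n}\,\|(K-K^\infty)^{t}\|$, which implies the stated norm inequality.

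For the asymptotic rate, I would diagonalize $W=\sum_i\lambda_i u_i u_i^\top$ with $\lambda_1=1$, $u_1=\1/\sqrt n$, and $|\lambda_i|\le\rho_W$ for $i\ge 2$. In each two-dimensional $K$-invariant subspace spanned by $(u_i,0)^\top$ and $(0,u_i)^\top$, the restriction of $K$ has characteristic polynomial $\nu^{2}-\gamma\lambda_i\nu-(1-\gamma)=0$. For $i=1$ the root $\nu=1$ has eigenvector $(\1,(1-\gamma)\1)^\top\in\operatorname{Im}(K^\infty)$ and is purged by $K-K^\infty$, while the second root $\nu=\gamma-1$ lies in $\ker(K^\infty)$ and has modulus $\gamma-1<\sqrt{\gamma-1}$ (since $\gamma\in(1,2)$). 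For $i\ge 2$, the Chebyshev-optimal choice $\gamma=2/(1+\sqrt{1-\rho_W^2})$ is precisely the one making $\gamma^{2}\rho_W^{2}=4(\gamma-1)$, so the discriminant $\gamma^{2}\lambda_i^{2}-4(\gamma-1)\le 0$ and the two complex-conjugate roots satisfy $|\nu_i^{\pm}|^{2}=\gamma-1$. Hence $\rho_K=\sqrt{\gamma-1}=\sqrt{(1-\sqrt{1-\rho_W^{2}})/(1+\sqrt{1-\rho_W^{2}})}$, and Gelfand's formula then gives both the limit $\rho_K=\lim_t\|(K-K^\infty)^{t}\|^{1/t}$ and the convergence $x^{t}\to\bar b\1$.

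Finally, the comparison with $\rho_W$ follows from the identity $(1-\rho_K)^{2}=2(1-\rho_W)/(1+\sqrt{1-\rho_W^{2}})$, obtained by direct manipulation of $\rho_K^{2}$ and the telescoping $1-\rho_W^{2}=(1-\rho_W)(1+\rho_W)$. Using $1+\sqrt{1-\rho_W^{2}}\in[1,2]$ gives $\sqrt{1-\rho_W}\le 1-\rho_K\le\sqrt{2(1-\rho_W)}$, which inverts to the stated bracket on $1/(1-\rho_K)$. The strict inequality $\rho_K<\rho_W$ then reduces to $(1-s)/(1+s)<1-s^{2}$ with $s=\sqrt{1-\rho_W^{2}}>0$. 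The main obstacle, and the source of the acceleration, is the spectral step: recognizing that $K-K^\infty$ removes exactly the single troublesome eigenvalue $1$ of $K$ while turning every remaining invariant block into a rotation of constant modulus $\sqrt{\gamma-1}$ under the Chebyshev tuning. The rest is bookkeeping around the rank-one projector $K^\infty$ and the centering trick $b\mapsto b-\bar b\1$.
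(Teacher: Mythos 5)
Your proposal is correct, and while it follows the same high-level strategy as the paper (pass to the auxiliary node-based iteration via Proposition \ref{prop:2}, observe that $K(1,\gamma W)$ collapses to $K$, control the error by $\|(K-K^\infty)^t\|$, and read the rate off the spectrum), the execution differs in three substantive ways, each to your advantage. First, the paper imports the spectral facts about $K$ --- the simple eigenvalue $1$ and the value of $\rho_K$ --- from the shift-register literature (Golub--Varga and its descendants), whereas you derive them from scratch by restricting $K$ to the two-dimensional invariant subspaces spanned by $(u_i,0)$ and $(0,u_i)$ and checking that the Chebyshev tuning $\gamma^2\rho_W^2=4(\gamma-1)$ forces every non-trivial block to have conjugate roots of modulus $\sqrt{\gamma-1}$; this makes the proof self-contained. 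Second, for the finite-$t$ bound the paper writes $x^t_v-\bar b$ as a sum of a linear-parameter error and a quadratic-parameter error and bounds each against $\|(K-K^\infty)^t\|$, accumulating the constant $4\sqrt{2n}/(2-\gamma)$; your centering trick $b\mapsto b-\bar b\1$ combined with $R^t=\alpha_t\1$, $\alpha_t\ge1$, collapses this to the single term $(r^t_v-\bar b R^t_v)/R^t_v$ and yields the sharper constant $\sqrt{2n}$, which implies the stated inequality since $2-\gamma\le 1$. Third, the paper obtains the final bracket on $1/(1-\rho_K)$ by verifying the elementary inequalities $1-2z\le\sqrt{(1-\sqrt{1-(1-z^2)^2})/(1+\sqrt{1-(1-z^2)^2})}\le 1-z$ on $[0,1]$, while you prove the exact identity $(1-\rho_K)^2=2(1-\rho_W)/(1+\sqrt{1-\rho_W^2})$, which gives the slightly stronger lower bound $\tfrac{1}{\sqrt 2}\sqrt{1/(1-\rho_W)}\le 1/(1-\rho_K)$ and makes the square-root acceleration transparent. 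The only caveat worth recording is the degenerate case $\rho_W=0$ (where $\gamma=1$ and $\rho_K=0$, so the strict inequality $\rho_K<\rho_W$ becomes an equality), but this is an artifact of the theorem statement rather than of your argument.
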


%
%
Theorem \ref{thm:convergence shift-register} shows that the choice of splitting parameters $\delta=1$ and $\Gamma = \gamma W$, where $\gamma$ and $W$ are defined as in the statement of the theorem,
allows the Min-Sum Splitting scheme to achieve the asymptotic rate of convergence that is given by the second largest eigenvalue in magnitude of the matrix $K$ defined in \eqref{def:K}, i.e., the quantity $\rho_K$. The matrix $K$ is the same matrix that describes shift-register methods for consensus \cite{YOUNG1972137,Cao_acceleratedgossip,Liu2013873}. In fact, the proof of Theorem \ref{thm:convergence shift-register} relies on the spectral analysis previously established for shift-registers, which can be traced back to \cite{Golub:1961}. See also \cite{GSJ13,Liu2013873}.

Following \cite{MR06}, let us consider the absolute measure of error given by $\| x^t - \bar b \1 \|/\sqrt{n}$ (recall that we assume $0<b_v<1$ so that $\| b \| \le \sqrt{n}$). From Theorem \ref{thm:convergence shift-register} it follows that, asymptotically, we have
$
	\| x^t - \bar b \1 \|/\sqrt{n} \lesssim 4 \sqrt{2}\rho_K^t/(2-\gamma).
$
If we define the asymptotic convergence time as the minimum time $t$ so that, asymptotically, $\| x^t - \bar b \1 \|/\sqrt{n} \lesssim \varepsilon$, then the Min-Sum Splitting scheme investigated in Theorem \ref{thm:convergence shift-register} has an asymptotic convergence time that is $O(1/(1-\rho_K)\log \{[1/(1-\rho_K)]/\varepsilon\})$. Given the last bound in Theorem \ref{thm:convergence shift-register}, this result achieves (modulo logarithmic terms) a square-root improvement over the convergence time of diffusive methods, which scale like $\Theta(1/(1-\rho_W)\log1/\varepsilon)$. For cycle graphs and, more generally, for higher-dimensional tori --- where $1/(1-\rho_W)$ is $\Theta(D^2)$ so that $1/(1-\rho_K)$ is $\Theta(D)$ \cite{roch2005,aldous-fill-2014} --- the convergence time is $O(D\log D/\varepsilon)$, where $D$ is the graph diameter.

As prescribed by Theorem \ref{thm:convergence shift-register}, the choice of $\gamma$ that makes the Min-Sum scheme achieve a subdiffusive rate depends on global properties of the graph $G$. Namely, $\gamma$ depends on the quantity $\rho_W$, the second largest eigenvalue in magnitude of the matrix $W$. This fact connects the acceleration mechanism induced by splitting in the Min-Sum scheme to the acceleration mechanism of lifted Markov chains techniques (see Figure \ref{fig:lifted}) and multi-step first order methods, as described in Section \ref{sec:Accelerated algorithms}. 

It remains to be investigated how choices of splitting parameters different than the ones investigated in Theorem \ref{thm:convergence shift-register} affect the convergence behavior of the Min-Sum Splitting algorithm.

\section{Conclusions}\label{sec:Conclusions}

The Min-Sum Splitting algorithm has been previously observed to yield convergence in settings where the ordinary Min-Sum protocol does not converge \cite{RT13}. In this paper we proved that the introduction of splitting parameters is not only fundamental to guarantee the convergence of the Min-Sum scheme applied to the consensus problem, but that proper tuning of these parameters yields accelerated convergence rates. As prescribed by Theorem \ref{thm:convergence shift-register}, the choice of splitting parameters that yields subdiffusive rates involves global type of information, via the spectral gap of a matrix associated to the original graph (see the choice of $\gamma$ in Theorem \ref{thm:convergence shift-register}). The acceleration mechanism exploited by Min-Sum Splitting
is analogous to the acceleration mechanism exploited by lifted Markov chain techniques --- where the transition matrix of the lifted random walks is typically chosen to depend on the total number of nodes in the graph \cite{DHN00,CFL99} or on its diameter \cite{NET-014} (global pieces of information) --- and to the acceleration mechanism exploited by multi-step gradient methods --- where the momentum/shift-register term is chosen as a function of the eigenvalues of a matrix supported on the original graph \cite{GSJ13} (again, a global information).
Prior to our results, this connection seems to have not been established in the literature. 
Our findings motivate further studies to generalize the acceleration due to splittings to other problem instances, beyond consensus.

\section*{Acknowledgements}
This work was partially supported by the NSF under Grant EECS-1609484.

\bibliography{bib}
\bibliographystyle{plain}

\appendix

\section{Proofs}

\label{sec:appendix}

We present, in order, the proofs of Proposition \ref{prop:1}, Proposition \ref{prop:2}, and Theorem \ref{thm:convergence shift-register}.\\

\begin{proof}[Proof of Proposition \ref{prop:1}]
First of all, note that the optimization problem \eqref{def:main problem} can be casted in the unconstrained formulation of problem \eqref{cost function min-sum} upon choosing the hard barrier function: $\phi_{vw}(z,z'):=0$ if $z= z'$ and $\phi_{vw}(z,z'):=\infty$ otherwise. With this choice, the minimization inside the definition of the message updates in Algorithm \ref{alg:Min-Sum splitting} admits the trivial solution
$
	\hat\mu^{s}_{wv} =
	(\delta-1) \hat\xi^{s}_{wv}
	+ \delta \hat\xi^{s}_{vw}
$.
Hence, Algorithm \ref{alg:Min-Sum splitting} yields the following update for the messages $\hat\mu^{s}=(\hat\mu^{s}_{wv})_{(w,v)\in\mathcal{E}}$:
\begin{align}
	\begin{aligned}
	\hat\mu^{s}_{wv} =& \ 
	(1-1/\delta) \phi_v
	- (\delta-1) \hat\mu^{s-1}_{wv}
	+ (\delta-1) \sum_{z\in\mathcal{N}(v)} \Gamma_{zv} \hat\mu^{s-1}_{zv}\\
	&+ \phi_w
	- \delta \hat\mu^{s-1}_{vw}
	+ \delta \sum_{z\in\mathcal{N}(w)} \Gamma_{zw} \hat\mu^{s-1}_{zw}.
	\end{aligned}
	\label{min-sum update}
\end{align}
In vector form, this update can be written as $\hat\mu^{s} = \hat k(\delta) + \widehat K(\delta,\Gamma) \hat\mu^{s-1}$, where $\hat k(\delta)_{wv} := \phi_w + (1-1/\delta) \phi_v$. From the linearity of the message update it follows that if $\phi_v(z) = \frac{1}{2}z^2 - b_v z$ and if we choose the initial messages to be quadratic functions, then the messages at any time $s> 0$ will remain quadratic. Namely, if we adopt the parametrization $\hat\mu^{0}_{vw}(z) = \frac{1}{2} \hat R_{vw}^{0} z^2 - \hat r_{vw}^{0} z$, then we have $\hat\mu^{s}_{vw}(z) = \frac{1}{2} \hat R_{vw}^{s} z^2 - \hat r_{vw}^{s} z$ with the linear and quadratic parameters updated, respectively, according to
$
	\hat R^{s} = (2-1/\delta)\1 + \widehat K(\delta,\Gamma) \hat R^{s-1}
$
and
$
	\hat r^{s} = \hat h(\delta) + \widehat K(\delta,\Gamma) \hat r^{s-1}.
$
The belief function reads
$	
	\mu^{t}_{v}(z) = \phi_v(z) + \delta \sum_{w\in\mathcal{N}(v)} \Gamma_{wv} \hat\mu^{t}_{wv}(z)
	= \frac{1}{2} [ 1 + \delta \sum_{w\in\mathcal{N}(v)} \Gamma_{wv} \hat R^{t}_{wv} ] z^2
	- [b_v + \delta \sum_{w\in\mathcal{N}(v)} \Gamma_{wv} \hat r^{t}_{wv} ] z.
$
As by assumption $1 + \delta \sum_{w\in\mathcal{N}(v)} \Gamma_{wv} \hat R^{t}_{wv} > 0$, 
$
	x_v^{t} := \arg\min_{z\in\R} \mu^{t}_{v}(z) = \frac{b_v + \delta \sum_{w\in\mathcal{N}(v)} \Gamma_{wv} \hat r^{t}_{wv}}{1 + \delta \sum_{w\in\mathcal{N}(v)} \Gamma_{wv} \hat R^{t}_{wv}}.
$
\end{proof}

\begin{proof}[Proof of Proposition \ref{prop:2}]
Recall from Algorithm \ref{alg:Min-Sum splitting} the definition of the belief function at time $s$, i.e., $\mu^{s}_v := \phi_v + \delta \sum_{w\in\mathcal{N}(v)} \Gamma_{wv} \hat\mu^{s}_{wv}$, and let $\mu^{s}\in\R^V$ be the vector whose $v$-th component is $\mu^{s}_v$. Let $\chi^{s}\in\R^V$ be the vector whose $v$-th component is given by the function $\chi^{s}_v := \phi_v - \delta \sum_{w\in\mathcal{N}(v)} \Gamma_{vw} \hat\mu^{s}_{vw}$. Let $\phi\in\R^V$ be the vector whose $v$-th component is the function $\phi_v$.
By taking the summations of update \eqref{min-sum update} over $w\in\mathcal{N}(v)$ and $v\in\mathcal{N}(w)$, respectively, and by performing the change of variables as prescribed by the definitions of $\mu^{s}$ and $\chi^{s}$ (using that $\Gamma$ is symmetric), we get that the functions $\mu^{s}_v$'s and $\chi^{s}_v$'s evolve according to the linear system
$
	(
	\mu^{s}, 
	\chi^{s}
	)^T
	=
	K(\delta,\Gamma)
	(
	\mu^{s-1}, 
	\chi^{s-1}
	)^T,
$
where the matrix $K(\delta,\Gamma)$ is defined as in \eqref{def:matrices}. 
From the linearity of the message updates it follows that if we choose the initial messages to be quadratic functions, then the messages at any time $s> 0$ will remain quadratic. Namely, if we adopt the parametrization $\mu^{0}_v(z) = \frac{1}{2} R_v^{0} z^2 - r_v^{0} z$ and $\chi^{0}_v(z) = \frac{1}{2} Q_v^{0} z^2 - q_v^{0} z$, then $\mu^{s}_v(z) = \frac{1}{2} R_v^{s} z^2 - r_v^{s} z$ and $\chi^{s}_v(z) = \frac{1}{2} Q_v^{s} z^2 - q_v^{s} z$, where the linear and quadratic parameters are updated according to
\begin{align*}
	\left( \begin{array}{c}
	r^{s}\\
	q^{s}
	\end{array} \right)
	=
	K(\delta,\Gamma)
	\left( \begin{array}{c}
	r^{s-1}\\
	q^{s-1}
	\end{array} \right),
	\qquad
	\left( \begin{array}{c}
	R^{s}\\
	Q^{s}
	\end{array} \right)
	=
	K(\delta,\Gamma)
	\left( \begin{array}{c}
	R^{s-1}\\
	Q^{s-1}
	\end{array} \right).
\end{align*}
If $R_v^{t} > 0$ the final estimates read
$
	x_v^{t} := \arg\min_{z\in\R} \mu^{t}_{v}(z) = r_v^{t}/R_v^{t}.
$
\end{proof}

\begin{proof}[Proof of Theorem \ref{thm:convergence shift-register}]
We analyze Algorithm \ref{alg:auxiliary consensus quadratic} with initial conditions $R^0=Q^0 = \1$ and $r^0=q^0 = b$. By Proposition \ref{prop:2}, the output of this algorithm coincides with the output of Algorithm \ref{alg:Min-Sum splitting consensus quadratic} with initial conditions $\hat R^{0} = \hat r^{0} = 0$.
 As $\Gamma=\gamma W$ with $W\1 = \1$, we have $\text{diag}(\Gamma \1) = \gamma\text{diag}(\1)=\gamma I$, and the matrix $K(\delta,\Gamma)$ in \eqref{def:matrices} reads as the matrix $K$ in \eqref{def:K}.
%
%
%
By the results in [11]
(see also 
[10]),
 we know that for the choice of $\gamma$ given in the statement of the theorem the following holds:
\begin{enumerate}
\item The matrix $K$ has an eigenvalue $1$ and all the remaining $2n-1$ eigenvalues have magnitude strictly less than one.
\item The second largest eigenvalue in magnitude of $K$ is given by the quantity $\rho_K$ defined in the statement of the theorem.
\end{enumerate}
It can be verified that
$$(\1,\1)^TK = (\1,\1)^T,
\qquad
K
	\left( \begin{array}{c}
	\1\\
	(1-\gamma)\1
	\end{array} \right)
=
	\left( \begin{array}{c}
	\1\\
	(1-\gamma)\1
	\end{array} \right).
$$
By Lemma 3 in [19],
which is a general version of Theorem 1 in [31],
we have $\lim_{t\rightarrow\infty}W^t = W^\infty$, where $W^\infty$ is defined as in \eqref{def:K}. By taking the limit for $t$ that goes to infinity on the two linear systems that define the message updates in Algorithm \ref{alg:auxiliary consensus quadratic} we get, respectively,
$$
	\left( \begin{array}{c}
	r^{\infty}\\
	q^{\infty}
	\end{array} \right)
	=
	K^\infty
	\left( \begin{array}{c}
	r^0\\
	q^0
	\end{array} \right),
	\qquad
	\left( \begin{array}{c}
	R^{\infty}\\
	Q^{\infty}
	\end{array} \right)
	=
	K^\infty
	\left( \begin{array}{c}
	R^0\\
	Q^0
	\end{array} \right),
$$
which yield $r^\infty = \bar b R^\infty$, $q^\infty = (1-\gamma) r^\infty = \bar b Q^\infty$, and $R^\infty = \frac{2}{2-\gamma} \1$, $Q^\infty = (1-\gamma) R^\infty$. Hence, we have $r^\infty_v / R^\infty_v = \bar b$.
The error decomposition
$$
	x^t_v - \bar b
	= \frac{r^t_v}{R^t_v} - \frac{r^\infty_v}{R^\infty_v}
	= \frac{r^t_v}{R^t_v} - \frac{r^\infty_v}{R^t_v} + \frac{r^\infty_v}{R^t_v} - \frac{r^\infty_v}{R^\infty_v}
	= \frac{1}{R^t_v} (r^t_v - r^\infty_v) + \frac{r^\infty_v}{R^t_vR^\infty_v}(R^\infty_v - R^t_v)
$$
yields, using that $R^t_v\ge 1$ and $\bar b < 1$, by the triangle inequality for the $\ell_2$ norm $\| \,\cdot\,\|$,
$$
	\| x^t - \bar b \1 \|
	\le \| r^t - r^\infty \| + \| R^t - R^\infty \|
	\le 2 \max\{\| r^t - r^\infty \|, \| R^t - R^\infty \| \}.
$$
We first bound the term for the quadratic parameters. As
$$
	\left( \begin{array}{c}
	R^t - R^\infty  \\
	Q^t - Q^\infty 
	\end{array} \right)
	=
	(K-K^\infty)
	\left( \begin{array}{c}
	R^{t-1}  \\
	Q^{t-1}
	\end{array} \right)
	=
	(K-K^\infty)
	\left( \begin{array}{c}
	R^{t-1} - R^\infty  \\
	Q^{t-1} - Q^\infty 
	\end{array} \right),
$$
we have
$$
	\left( \begin{array}{c}
	R^t - R^\infty  \\
	Q^t - Q^\infty 
	\end{array} \right)
	=
	(K-K^\infty)^t
	\left( \begin{array}{c}
	R^{0} - R^\infty  \\
	Q^{0} - Q^\infty 
	\end{array} \right),
$$
from which it follows that
$$
	\| R^t - R^\infty \| \le 
	\left\| 
	\left( \begin{array}{c}
	R^t - R^\infty  \\
	Q^t - Q^\infty 
	\end{array} \right)
	\right\|
	\le
	\| (K-K^\infty)^t \|
	\left\| 
	\left( \begin{array}{c}
	R^0 - R^\infty  \\
	Q^0 - Q^\infty 
	\end{array} \right)
	\right\|.
$$
Given that
$$
	\left\| \left( \begin{array}{c}
	R^0 - R^\infty  \\
	Q^0 - Q^\infty 
	\end{array} \right)
	\right\|
	=
	\sqrt{\| \1 - R^\infty \|^2_2 + \| \1 - Q^\infty \|^2_2},
$$
with $\| \1 - R^\infty \|^2_2 = \| \1 - Q^\infty \|^2_2 = \frac{\gamma^2}{(2-\gamma)^2} n$, we get
$
	\| R^t - R^\infty \| \le \| (K-K^\infty)^t \| \frac{\gamma}{2-\gamma} \sqrt{2n}.
$
Proceeding analogously for the linear parameters, we find
$$
	\| r^t - r^\infty \|
	\le
	\| (K-K^\infty)^t \| \sqrt{\| r^0 - r^\infty \|^2_2 + \| q^0 - q^\infty \|^2_2}.
$$
We have
$
	\| r^0 - r^\infty \| = 
	\| b - \bar b R^\infty \|
 	= \| b - \bar b \1 + \bar b \1 - \bar b R^\infty \|
	\le \| b - \bar b \1 \| + |\bar b| \| \1 - R^\infty \|
$
so that
$
	\| r^0 - r^\infty \|
	\le \sqrt{n} + \frac{\gamma}{(2-\gamma)} \sqrt{n} = 
	\frac{2}{(2-\gamma)} \sqrt{n}.
$
In the same way we get
$
	\| q^0 - q^\infty \| = \| b - \bar b Q^\infty \| \le \frac{2}{(2-\gamma)} \sqrt{n}.
$
All together,
$
	\| r^t - r^\infty \|
	\le
	\| (K-K^\infty)^t \| \frac{2}{2-\gamma} \sqrt{2n}.
$
Finally, as $\gamma < 2$ we obtain
$
	\| x^t - \bar b \1 \|
	\le \frac{4}{2-\gamma} \sqrt{2n} \| (K-K^\infty)^t \|.
$

It can be checked that for $z \in [0,1]$ the following inequalities hold
$$
	1-2z
	\le
	\sqrt{\frac{1-\sqrt{1-(1-z^2)^2}}{1+\sqrt{1-(1-z^2)^2}}}
	\le 1-z.
$$
Upon choosing $\rho_W = 1-z^2$, we recover the bounds stated at the end of Theorem \ref{thm:convergence shift-register}.
\end{proof}
\end{document}